\numberwithin{equation}{section}
\newtheorem{theorem}{Theorem}[section]
\newtheorem{lemma}[theorem]{Lemma}
\newtheorem{corollary}[theorem]{Corollary}
\newtheorem{remark}[theorem]{Remark}
\newtheorem{conjecture}[theorem]{Conjecture}
\newtheorem{definition}[theorem]{Definition}
\newtheorem*{thm*}{Main Theorem}
\begin{document}

\makeatletter

\newdimen\bibspace
\setlength\bibspace{2pt}
\renewenvironment{thebibliography}[1]{%
 \section*{\refname 
       \@mkboth{\MakeUppercase\refname}{\MakeUppercase\refname}}%
     \list{\@biblabel{\@arabic\c@enumiv}}%
          {\settowidth\labelwidth{\@biblabel{#1}}%
           \leftmargin\labelwidth
           \advance\leftmargin\labelsep
           \itemsep\bibspace
           \parsep\z@skip     %
           \@openbib@code
           \usecounter{enumiv}%
           \let\p@enumiv\@empty
           \renewcommand\theenumiv{\@arabic\c@enumiv}}%
     \sloppy\clubpenalty4000\widowpenalty4000%
     \sfcode`\.\@m}
    {\def\@noitemerr
      {\@latex@warning{Empty `thebibliography' environment}}%
     \endlist}

\makeatother

\pdfbookmark[2]{A note on Oliver's $p$-group conjecture}{beg}

\title{{\bf A note on Oliver's $p$-group conjecture} }

\footnotetext{Supported by NSFC grant (11501183).\\
1. Department of Mathematics, Hubei University, Wuhan, $430062$, China; \\
2. Departament de Matem$\mathrm{\grave{a}}$tiques, Universitat Aut$\mathrm{\grave{o}}$noma de Barcelona, E-08193 Bellaterra,
Spain.\\
\hspace*{2 ex}E-mail address: xuxingzhong407@126.com; xuxingzhong407@mat.uab.cat}

\author{{\small{ Xingzhong Xu$^{1,2}$} }
}

\date{29/03/2017}
\maketitle

{\small

\noindent\textbf{Abstract.} {\small{Let $S$ be a $p$-group for an odd prime $p$, Oliver proposed the conjecture that
the Thompson subgroup $J(S)$ is always contained in the Oliver subgroup $\mathfrak{X}(S)$.
That means he conjectured that $|J(S)\mathfrak{X}(S):\mathfrak{X}(S)|=1$.
Let $\mathfrak{X}_1(S)$ be a subgroup of $S$ such that $\mathfrak{X}_1(S)/\mathfrak{X}(S)$ is the center of $S/\mathfrak{X}(S)$.
In this short note, we prove that $J(S)\leq \mathfrak{X}(S)$
if and only if $J(S)\leq \mathfrak{X}_1(S)$.
 As an easy application, we prove that
 $|J(S)\mathfrak{X}(S):\mathfrak{X}(S)|\neq p$.
 }}\\

\noindent\textbf{Keywords: }{\small {Oliver's $p$-group;  the Thompson subgroup. }}

\noindent\textbf{Mathematics Subject Classification (2010):}  \ 20D15 $\cdot$ \ 20C20}

\section{\bf Introduction}

Recently, \cite{BLO1, BLO2} renewed the remarkable definition "centric linking system" which is like a
bridge between the saturated fusion systems and the classifying spaces.
The existence and uniqueness of centric linking system can be used to solve the Martino-Priddy conjecture.
In the beginning of these works, \cite{O} proposed Oliver's $p$-group conjecture, a purely group-theoretic problem.
A positive resolution of this conjecture would give the existence and uniqueness of centric linking systems for
fusion systems at odd primes(also see \cite{Ly}).

Curiously, the Martino-Priddy conjecture has been proved by \cite{O, O2}, and the existence and uniqueness of centric linking system
has been solved by \cite{Ch, O3}, but Oliver's $p$-group conjecture is still open.
There are only some works\cite{GHL, GHM, GHM2, Ly} about this conjecture for some
special cases.

Now, we list Oliver's $p$-group conjecture as follows.

\begin{conjecture}\cite[Conjecture 3.9]{O} Let $S$ be a $p$-group for an odd prime $p$. Then
$$J(S)\leq \mathfrak{X}(S),$$
where $J(S)$ is the Thompson subgroup generated by all elementary abelian $p$-subgroups whose rank is
the $p$-rank of $S$, and $\mathfrak{X}(S)$ is the Oliver subgroup described in \cite[Definition 3.1]{O} and \cite[Definition]{GHL}.
\end{conjecture}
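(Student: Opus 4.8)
The plan is to fix an elementary abelian subgroup $A\le S$ whose rank equals the $p$-rank of $S$ and to prove $A\le \mathfrak{X}(S)$; since $J(S)$ is generated by such subgroups, this yields the conjecture. Writing $Q=\mathfrak{X}(S)$ and $\bar S=S/Q$, I would first record the structural input: there are subgroups $1=Q_0\le Q_1\le\cdots\le Q_n=Q$, all normal in $S$, with $[\Omega_1(\Cent_S(Q_{i-1})),Q_i]\le Q_{i-1}$ for every $i$, and $Q$ is centric, $\Cent_S(Q)=\ZZ(Q)$. The first reduction is to replace the target $A\le Q$ by $\bar A\le\ZZ(\bar S)$: by the equivalence $J(S)\le\mathfrak{X}(S)\Leftrightarrow J(S)\le\mathfrak{X}_1(S)$ together with the defining relation $\mathfrak{X}_1(S)/Q=\ZZ(\bar S)$, it suffices to prove the a priori weaker statement $[A,S]\le Q$, i.e. that the image of every maximal-rank elementary abelian subgroup is central in $\bar S$.

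To attack $[A,S]\le Q$ I would pass to a minimal counterexample $S$ and linearize the relevant action. The layers $Q_i/Q_{i-1}$ and the module $W=\Omega_1(\ZZ(Q))$ are $\mathbb{F}_p[\bar S]$-modules, and the defining conditions $[\Omega_1(\Cent_S(Q_{i-1})),Q_i]\le Q_{i-1}$ translate into vanishing commutators of the associated operators; since $p$ is odd, these data can be assembled, via the Lazard correspondence as in \cite{GHL}, into a single Lie algebra $L$ over $\mathbb{F}_p$ carrying a $\bar S$-action. In this language $\bar A$ becomes an offending elementary abelian subgroup, and $(\bar S,W)$ is precisely a failure-of-factorization configuration; after a Thompson-replacement step $\bar A$ may be taken to act quadratically on $W$ and on each layer. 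The aim is then to force such a maximal, quadratically acting $\bar A$ to centralize every layer, which by the chain condition and the centric property $\Cent_S(Q)=\ZZ(Q)$ pushes $\bar A$ into $\ZZ(\bar S)$ and ultimately collapses the counterexample.

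The endgame would run through a Timmesfeld-style replacement up the chain: if $\bar A$ fails to centralize some layer $Q_i/Q_{i-1}$, one exchanges $A$ for a better-positioned elementary abelian subgroup of the same maximal rank that does centralize it, and iterates, the centric condition guaranteeing that the process can terminate only at $\bar A\le\ZZ(\bar S)$. The genuine obstacle — and the point at which every known partial result stops — is the middle step: controlling quadratic, maximal-rank action on the layers over $\mathbb{F}_p$. Unlike the semisimple situation or the small-class cases treated in \cite{GHL,GHM,GHM2,Ly}, there is no general classification of the failure-of-factorization modules arising from the nilpotent, non-semisimple action afforded by $\bar S$, and the dimension/counting inequality that would close the argument is not known to hold in general. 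I therefore expect this quadratic-module analysis, rather than the surrounding reductions, to be where the real difficulty lies.
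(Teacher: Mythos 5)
You are attempting to prove the one statement in this paper that is not a theorem: Conjecture~1.1 is Oliver's $p$-group conjecture, which the paper explicitly states is still open and for which the paper offers no proof --- its actual content consists of reductions (Theorem~1.2, Corollary~1.3, Theorems~1.4--1.5, and the corollary that $|J(S)\mathfrak{X}(S):\mathfrak{X}(S)|\neq p$). So there is no paper proof to compare against, and your proposal must be judged on its own terms. Its first step is sound but is not independent input: the equivalence $J(S)\leq \mathfrak{X}(S)\Leftrightarrow J(S)\leq \mathfrak{X}_1(S)$, i.e.\ the reduction of the conjecture to showing $[A,S]\leq \mathfrak{X}(S)$ for every maximal-rank elementary abelian $A$, is precisely this paper's Theorem~1.2/Corollary~1.3, proved there by a minimal-counterexample argument using Lemma~3.1, Oliver's Lemmas 3.2--3.3, and \cite[Theorem 1.1]{GHM} for the case $E\mathfrak{X}(S)=S$. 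Up to that point you have rederived (or invoked) the content of the note itself, nothing more.

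Everything after that reduction is a program, not a proof, and it contains a genuine gap which you yourself concede. Two concrete problems. First, you misstate the defining condition of $\mathfrak{X}(S)$: condition $(\ast)$ requires the vanishing of the $(p-1)$-fold iterated commutator, $[\Omega_1(\Cent_S(Q_{i-1})),Q_i;\,p-1]=1$, not $[\Omega_1(\Cent_S(Q_{i-1})),Q_i]\leq Q_{i-1}$; for $p>3$ the chain has no central-type layer structure of the kind your linearization assumes, and ``quadratic'' action is strictly stronger than what $(\ast)$ supplies, so the passage to a Lie algebra with quadratically acting $\bar A$ does not follow from the hypotheses. Second, the decisive step --- the Timmesfeld-style replacement exchanging $A$ for an equal-rank elementary abelian subgroup centralizing a given layer, with termination forced into $\ZZ(\bar S)$ --- is asserted without any argument; this is exactly the point where the known partial results \cite{GHL,GHM,GHM2,Ly} stop, as they handle only special offender configurations (e.g.\ abelian quotient, small nilpotency class, or $2$-subnormal quadratic offenders). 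Since you explicitly flag the quadratic FF-module analysis as unresolved, the proposal establishes nothing beyond the paper's own Corollary~1.3; as a proof of Conjecture~1.1 it fails at the step you identify, and honestly labeling that step as the open problem does not close it.
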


Our main results are as follows.

\begin{theorem} Let $S$ be a finite $p$-group for an odd prime $p$.
Set $\mathfrak{X}_1(S) \leq S$ such that  $\mathfrak{X}_1(S)/\mathfrak{X}(S)=Z(S/\mathfrak{X}(S))$.
If $E$ is an elementary abelian $p$-subgroup whose rank is
the $p$-rank of $S$ and $E \leq \mathfrak{X}_1(S)$,
 then $E\leq \mathfrak{X}(S)$.
\end{theorem}

Here, $Z(S/\mathfrak{X}(S))$ means the center of $S/\mathfrak{X}(S)$. Moreover, using above theorem, we get the following corollary.
\begin{corollary} Let $S$ be a finite $p$-group for an odd prime $p$.
Set $\mathfrak{X}_1(S) \leq S$ such that $\mathfrak{X}_1(S)/\mathfrak{X}(S)=Z(S/\mathfrak{X}(S))$.
Then $J(S)\leq \mathfrak{X}(S)$ if and only if $J(S)\leq \mathfrak{X}_1(S)$.
\end{corollary}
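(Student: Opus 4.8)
The plan is to deduce the corollary directly from the Theorem, using only the defining property of the Thompson subgroup as the subgroup generated by the elementary abelian $p$-subgroups of maximal rank. Write $\mathcal{E}$ for the set of all elementary abelian $p$-subgroups $E \leq S$ whose rank equals the $p$-rank of $S$, so that $J(S) = \langle E : E \in \mathcal{E} \rangle$ by definition.

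First I would dispose of the easy implication. Since $\mathfrak{X}_1(S)/\mathfrak{X}(S) = Z(S/\mathfrak{X}(S))$ is, being a center, in particular a subgroup of $S/\mathfrak{X}(S)$, we have $\mathfrak{X}(S) \leq \mathfrak{X}_1(S)$. Hence if $J(S) \leq \mathfrak{X}(S)$, then $J(S) \leq \mathfrak{X}_1(S)$, and the forward direction follows with no further work.

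For the reverse implication, assume $J(S) \leq \mathfrak{X}_1(S)$. Take any $E \in \mathcal{E}$. Then $E \leq J(S) \leq \mathfrak{X}_1(S)$, so $E$ is an elementary abelian $p$-subgroup of maximal rank contained in $\mathfrak{X}_1(S)$; this is precisely the hypothesis of the Theorem. Applying the Theorem verbatim yields $E \leq \mathfrak{X}(S)$. Since this holds for every $E \in \mathcal{E}$, and these subgroups generate $J(S)$, we conclude $J(S) = \langle E : E \in \mathcal{E} \rangle \leq \mathfrak{X}(S)$, completing the equivalence.

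Because the whole argument amounts to applying the Theorem to each generator of $J(S)$, there is no genuine obstacle at the level of the corollary: all of the difficulty has been absorbed into the Theorem itself. The only point requiring any care is the trivial bookkeeping observation that $J(S) \leq \mathfrak{X}_1(S)$ forces each individual maximal-rank elementary abelian subgroup into $\mathfrak{X}_1(S)$, which is immediate from $E \leq J(S)$. This is exactly why the authors describe the result as an easy application.
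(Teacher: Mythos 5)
Your proposal is correct and follows essentially the same route as the paper: the forward direction is the trivial containment $\mathfrak{X}(S)\leq \mathfrak{X}_1(S)$, and the reverse direction applies Theorem 1.2 to each maximal-rank elementary abelian subgroup $E\leq J(S)\leq \mathfrak{X}_1(S)$, exactly as in the paper's one-line proof of Corollary 3.3. No gaps.
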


Actually, if $S/\mathfrak{X}(S)$ is not trivial, then $\mathfrak{X}(S)$ is a proper subgroup of $\mathfrak{X}_1(S)$.
So this corollary should be useful to understand Oliver's $p$-group conjecture.

With further consideration, we get the following theorems.
\begin{theorem} Let $S$ be a finite $p$-group for an odd prime $p$. Then Oliver's $p$-group conjecture holds
 for $S$ if and only if $\mathfrak{X}(L)=\mathfrak{X}(S)$ for each subgroup $L$ with
 $\mathfrak{X}(S)\leq L\leq J(S)\mathfrak{X}(S)$.
\end{theorem}

\begin{theorem} Let $S$ be a finite $p$-group for an odd prime $p$.
If $L\unlhd S$ for each subgroup $L$ with $\mathfrak{X}(S)\leq L\leq J(S)\mathfrak{X}(S)$,
 then $J(S)\leq \mathfrak{X}(S)$.
\end{theorem}

$Structure~ of ~ the~ paper:$ After recalling the basic definitions and properties of Oliver's $p$-group in Section 2,
we give proofs of Theorem 1.2 and Corollary 1.3 in Section 3. Then in Section 4 we prove Theorem 1.4-5.

\section{\bf Oliver's $p$-group and some lemmas}

In this section we collect some contents that will be need later, we refer to \cite{GHL, GHM,O}.
First, we recall the definition of Oliver's $p$-group as follows.
\begin{definition}\cite[Definition 3.1]{O},\cite[Definition]{GHL} Let $S$ be a finite $p$-group and
$K\unlhd S$ a normal subgroup. There exists a sequence
$$1=Q_0\leq Q_1\leq\cdots\leq Q_n=\mathfrak{X}(S)$$
such that $Q_i\unlhd S$, and such that
$$[\Omega_1(C_S(Q_{i-1})), Q_i; p-1]=1\quad\quad \quad\quad \quad\quad (\ast)$$
holds for each $1\leq i\leq n$.
The unique largest normal subgroup of $S$ which admits such a sequence
is called $\mathfrak{X}(S)$, the Oliver subgroup of $S$.
\end{definition}

\begin{remark}\cite[p.334]{O} If $K, L\unlhd S$ and there exist two sequences
$$1=Q_0\leq Q_1\leq\cdots\leq Q_n=K;~~~1=R_0\leq R_1\leq\cdots\leq R_m=L$$
such that $Q_i\unlhd S, R_j\unlhd S$, and such that
$$[\Omega_1(C_S(Q_{i-1})), Q_i; p-1]=1~~~ \mathrm{and} ~~~[\Omega_1(C_S(R_{j-1})), R_j; p-1]=1$$
holds for each $1\leq i\leq n$ and $1\leq j\leq m$ respectively.
Then we have a sequence
$$1=Q_0\leq Q_1\leq\cdots\leq Q_n=K\leq Q_nR_0\leq Q_nR_1\leq\cdots\leq Q_nR_m=KL$$
satisfies the condition $(\ast)$.
\end{remark}

The following lemmas are important to prove the main results.
\begin{lemma}\cite[Lemma 3.2]{O} Let $S$ be a finite $p$-group, then $C_{S}(\mathfrak{X}(S))=Z(\mathfrak{X}(S))$.
\end{lemma}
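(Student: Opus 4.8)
The plan is to prove the nontrivial inclusion $C_S(\mathfrak{X}(S))\leq \mathfrak{X}(S)$; the reverse inclusion $Z(\mathfrak{X}(S))\leq C_S(\mathfrak{X}(S))$ is automatic, and since $Z(\mathfrak{X}(S))=\mathfrak{X}(S)\cap C_S(\mathfrak{X}(S))$, the inclusion $C_S(\mathfrak{X}(S))\leq \mathfrak{X}(S)$ immediately upgrades to equality. Write $X=\mathfrak{X}(S)$ and $D=C_S(X)$, and note $D\unlhd S$ because $X\unlhd S$. The strategy is to enlarge the defining sequence $1=Q_0\leq\cdots\leq Q_n=X$ of Definition 2.1 by a single $S$-invariant term and then let the maximality of $\mathfrak{X}(S)$ push that term back inside $X$.

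The term I would append is $Q_{n+1}=X\,Z_2(D)$, where $Z_2(D)$ is the second center of $D$ (so $Z_2(D)/Z(D)=Z(D/Z(D))$); it is characteristic in $D$ and hence $Q_{n+1}\unlhd S$. The crux is to verify the condition $(\ast)$, namely $[\Omega_1(C_S(X)),\,X Z_2(D);\,p-1]=1$. First I would observe that $\Omega_1(C_S(X))=\Omega_1(D)\leq D=C_S(X)$, so $\Omega_1(D)$ centralizes $X$; using the commutator identity $[a,xz]=[a,z]\,[a,x]^z$ this makes the $X$-part disappear and gives $[\Omega_1(D),X Z_2(D)]=[\Omega_1(D),Z_2(D)]\leq [D,Z_2(D)]\leq Z(D)$. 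Applying the same reduction once more, and using that the resulting subgroup lies in $Z(D)\leq C_S(X)$ (so it again centralizes $X$), I get $[\Omega_1(D),X Z_2(D);2]\leq [Z(D),Z_2(D)]\leq [Z(D),D]=1$. Because $p$ is odd we have $p-1\geq 2$, so $(\ast)$ holds; thus $X Z_2(D)$ admits a defining sequence, and maximality forces $X Z_2(D)\leq X$, i.e. $Z_2(D)\leq X$.

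To finish, combine $Z_2(D)\leq X\cap D=Z(X)$ with the always-valid chain $Z(X)\leq Z(D)\leq Z_2(D)$ to conclude $Z(D)=Z_2(D)$; for a $p$-group this forces $D$ to equal its center, so $D$ is abelian and $D=Z(D)=Z(X)\leq X$, which yields $C_S(X)=Z(X)$. The main obstacle is the middle step: trying to absorb all of $C_S(X)$ at once fails, since the nilpotency class of $C_S(X)$ is a priori unbounded while only a $(p-1)$-fold commutator is allowed. The key idea that unlocks the argument is to climb only as far as the \emph{second} center, so that the iterated commutator collapses in exactly two steps, comfortably within the budget $p-1$ for odd $p$; after that the elementary fact ``$Z_2=Z$ implies abelian'' does the remaining work. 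The points I would double-check carefully are the commutator identity $[A,BC]=[A,C]\,[A,B]^C$ for the subgroup-level reductions and the standard relation $[Z_2(D),D]\leq Z(D)$, which together make the two-step vanishing rigorous.
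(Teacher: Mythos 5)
Your proposal is correct, but note that the paper itself gives no proof of this statement: the lemma is quoted from Oliver \cite[Lemma 3.2]{O}, which is exactly where your argument comes from in substance. Appending $Q_{n+1}=\mathfrak{X}(S)\,Z_2(C)$ (with $C=C_S(\mathfrak{X}(S))$, so $Z_2(C)\unlhd S$) to the defining chain, collapsing the iterated commutator in two steps via $[\Omega_1(C),\mathfrak{X}(S)Z_2(C)]\leq[C,Z_2(C)]\leq Z(C)$ and $[Z(C),\mathfrak{X}(S)Z_2(C)]=1$ (valid since $p-1\geq 2$ for odd $p$), then deducing $Z_2(C)\leq \mathfrak{X}(S)\cap C=Z(\mathfrak{X}(S))\leq Z(C)$ and hence that $C$ is abelian, is essentially Oliver's original proof, and all the steps you flagged for checking ($[a,xz]=[a,z][a,x]^z$ on generators and $[Z_2(C),C]\leq Z(C)$) do hold.
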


\begin{lemma}\cite[Lemma 3.3]{O} Let $S$ be a finite $p$-group. Let $Q\unlhd S$ be any normal subgroup
such that
$$[\Omega_1(Z(\mathfrak{X}(S))), Q; p-1]=1.$$
Then $Q\leq \mathfrak{X}(S)$.
\end{lemma}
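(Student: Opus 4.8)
The plan is to show that the product $\mathfrak{X}(S)Q$ itself admits a sequence as in Definition 2.1, so that the maximality built into the definition of the Oliver subgroup forces $\mathfrak{X}(S)Q\leq\mathfrak{X}(S)$, whence $Q\leq\mathfrak{X}(S)$. Since both $\mathfrak{X}(S)$ and $Q$ are normal in $S$, the product $H:=\mathfrak{X}(S)Q$ is a normal subgroup of $S$; and since $\mathfrak{X}(S)$ already carries a sequence $1=Q_0\leq\cdots\leq Q_n=\mathfrak{X}(S)$ satisfying $(\ast)$, by Remark 2.2 it suffices to append a single term $Q_{n+1}:=H$ and to verify that $(\ast)$ holds at this last step, i.e. that
$$[\Omega_1(C_S(\mathfrak{X}(S))),\,H;\,p-1]=1.$$

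Here Lemma 2.3 enters: it gives $C_S(\mathfrak{X}(S))=Z(\mathfrak{X}(S))$, so writing $V:=\Omega_1(Z(\mathfrak{X}(S)))$ we have $\Omega_1(C_S(\mathfrak{X}(S)))=V$, and the displayed condition becomes exactly $[V,H;p-1]=1$. By hypothesis we already know $[V,Q;p-1]=1$, so the whole argument reduces to showing that enlarging $Q$ to $H=\mathfrak{X}(S)Q$ does not change the iterated commutator, that is
$$[V,H;k]=[V,Q;k]\qquad\text{for every }k\geq 0.$$

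I would prove this by induction on $k$, the case $k=0$ being trivial. The two structural facts I would use are that $V$ is centralized by $\mathfrak{X}(S)$ (as $V\leq Z(\mathfrak{X}(S))$) and that $V$ is normal in $S$ (being characteristic in the normal subgroup $\mathfrak{X}(S)$), so that every iterated commutator $[V,Q;k]$ stays inside $V$. For the inductive step, let $W\leq V$ be any subgroup centralized by $\mathfrak{X}(S)$; then for $w\in W$ and any $h\in H$, written as $h=xq$ with $x\in\mathfrak{X}(S)$ and $q\in Q$, the commutator identity $[w,xq]=[w,q]\,[w,x]^{q}$ together with $[w,x]=1$ gives $[w,h]=[w,q]$, and hence $[W,H]=[W,Q]$. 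Applying this with $W=[V,Q;k]\leq V$, which is again centralized by $\mathfrak{X}(S)$, advances the induction and yields $[V,H;k+1]=[[V,Q;k],H]=[[V,Q;k],Q]=[V,Q;k+1]$. In particular $[V,H;p-1]=[V,Q;p-1]=1$, which is precisely the condition $(\ast)$ needed for the appended term, and the argument is complete.

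The step I expect to be the crux is the reduction $[V,H;k]=[V,Q;k]$: one must check at every level of the iterated commutator that the relevant subgroup still lies in $V$ and is therefore still centralized by $\mathfrak{X}(S)$, which is exactly what makes the $\mathfrak{X}(S)$-part disappear in $[w,xq]=[w,q]$. Everything else is bookkeeping with the maximality of $\mathfrak{X}(S)$, the concatenation of sequences from Remark 2.2, and the identification $\Omega_1(C_S(\mathfrak{X}(S)))=V$ coming from Lemma 2.3.
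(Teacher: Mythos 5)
Your proposal is correct, but note that the paper never proves this lemma at all: it is quoted verbatim from Oliver \cite[Lemma 3.3]{O}, so there is no in-paper proof to compare against. Your reconstruction --- forming the normal subgroup $H=\mathfrak{X}(S)Q$, appending it as one extra term to a sequence witnessing $\mathfrak{X}(S)$, identifying $\Omega_1(C_S(\mathfrak{X}(S)))=\Omega_1(Z(\mathfrak{X}(S)))$ via Lemma 2.3, and killing the $\mathfrak{X}(S)$-part of each commutator through the identity $[w,xq]=[w,q][w,x]^q$ to get $[V,H;k]=[V,Q;k]$ before invoking the maximality in Definition 2.1 --- is sound in every step (the only cosmetic slip is the appeal to Remark 2.2, which concerns concatenating two sequences and is not needed for appending a single term) and is essentially Oliver's original argument.
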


\section{\bf The proof of the Theorem 1.2}

In this section, we give a proof of Theorem 1.2. We prove the following lemma first.

\begin{lemma} Let $S$ be a finite $p$-group, if $\mathfrak{X}(S)\leq L\leq S$, then
$$\mathfrak{X}(S)\leq\mathfrak{X}(L).$$
\end{lemma}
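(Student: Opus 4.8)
The plan is to show that the very sequence witnessing $\mathfrak{X}(S)$ inside $S$ also witnesses it inside $L$, so that $\mathfrak{X}(S)$ qualifies as a subgroup admitting an Oliver-type chain relative to $L$. By Definition 2.1 there is a chain
$$1 = Q_0 \leq Q_1 \leq \cdots \leq Q_n = \mathfrak{X}(S)$$
with $Q_i \unlhd S$ and $[\Omega_1(C_S(Q_{i-1})), Q_i; p-1] = 1$ for each $1\leq i\leq n$. Since $\mathfrak{X}(S) \leq L \leq S$ and each $Q_i\leq \mathfrak{X}(S)$, I would first record that every $Q_i$, being normal in $S$ and contained in $L$, is automatically normal in $L$.

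Next I would compare the two centralizers. Because $Q_{i-1}\leq L\leq S$, we have $C_L(Q_{i-1}) = C_S(Q_{i-1}) \cap L \leq C_S(Q_{i-1})$; taking $\Omega_1$ and using that $\Omega_1$ is monotone under inclusion of $p$-groups gives $\Omega_1(C_L(Q_{i-1})) \leq \Omega_1(C_S(Q_{i-1}))$. The crucial observation is that replacing $S$ by the smaller overgroup $L$ of $\mathfrak{X}(S)$ only shrinks the centralizer, hence only shrinks the left-hand entry of the iterated commutator, so condition $(\ast)$ becomes easier, not harder, to satisfy.

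Then I would invoke monotonicity of the iterated commutator in its first argument: from $\Omega_1(C_L(Q_{i-1})) \leq \Omega_1(C_S(Q_{i-1}))$ it follows that
$$[\Omega_1(C_L(Q_{i-1})), Q_i; p-1] \leq [\Omega_1(C_S(Q_{i-1})), Q_i; p-1] = 1.$$
Thus the same chain $1 = Q_0 \leq \cdots \leq Q_n = \mathfrak{X}(S)$ is a sequence of exactly the type required by Definition 2.1, now relative to $L$, exhibiting $\mathfrak{X}(S)$ as a normal subgroup of $L$ admitting such a sequence. Since $\mathfrak{X}(L)$ is the unique largest normal subgroup of $L$ admitting such a sequence, I conclude $\mathfrak{X}(S) \leq \mathfrak{X}(L)$.

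I do not expect a serious obstacle here: the argument is essentially a verification that the witnessing data transfers downward along $L \leq S$. The one point to handle with care is the direction of the inclusion of centralizers, namely that it is $C_L(Q_{i-1}) \leq C_S(Q_{i-1})$ (which is what makes $(\ast)$ persist in $L$) rather than the reverse, together with the two elementary monotonicity facts for $\Omega_1(\cdot)$ and for the iterated commutator $[\,\cdot\,, Q_i; p-1]$ in its first slot.
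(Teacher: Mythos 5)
Your proposal is correct and is essentially the paper's own proof: both take the chain $1=Q_0\leq\cdots\leq Q_n=\mathfrak{X}(S)$ witnessing condition $(\ast)$ in $S$, observe that each $Q_i$ is normal in $L$ and that $C_L(Q_{i-1})\leq C_S(Q_{i-1})$ forces $[\Omega_1(C_L(Q_{i-1})),Q_i;p-1]\leq[\Omega_1(C_S(Q_{i-1})),Q_i;p-1]=1$, and conclude by maximality of $\mathfrak{X}(L)$. Your write-up is slightly more explicit than the paper's (spelling out $C_L(Q_{i-1})=C_S(Q_{i-1})\cap L$ and the monotonicity of $\Omega_1$ and of the iterated commutator in its first slot), but the argument is the same.
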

\begin{proof} There exists a series of subgroups
$$1=Q_0\leq Q_1\leq\cdots\leq Q_n=\mathfrak{X}(S)$$
such that $Q_i\unlhd S$, and such that
$$[\Omega_1(C_S(Q_{i-1})), Q_i; p-1]=1$$
holds for each $1\leq i\leq n$.

Now, we will prove that $\mathfrak{X}(L)\geq\mathfrak{X}(S)$.
Since $Q_i\unlhd S$, thus $Q_i\unlhd L$. Also
$C_{L}(Q_i)\leq C_{S}(Q_i)$ for  each $1\leq i\leq n$.
Then we have
$$[\Omega_1(C_L(Q_{i-1})), Q_i; p-1]\leq[\Omega_1(C_S(Q_{i-1})), Q_i; p-1]=1.$$
Hence, by the definition of $\mathfrak{X}(L)$, we have $\mathfrak{X}(L)\geq\mathfrak{X}(S)$.
\end{proof}

Using the above lemma, we give a proof of Theorem 1.2 as follows.

\begin{theorem} Let $S$ be a finite $p$-group for an odd prime $p$.
Set $\mathfrak{X}_1(S) \leq S$ such that  $\mathfrak{X}_1(S)/\mathfrak{X}(S)=Z(S/\mathfrak{X}(S))$.
If $E$ is an elementary abelian $p$-subgroup whose rank is
the $p$-rank of $S$ and $E \leq \mathfrak{X}_1(S)$,
 then $E\leq \mathfrak{X}(S)$.
\end{theorem}

\begin{proof}Set $E$ is an elementary abelian $p$-subgroup whose rank is
the $p$-rank of $S$. Let $(S, E)$ be a minimal counterexample,
that is  $E \leq \mathfrak{X}_1(S)$,
but $E\nleq \mathfrak{X}(S)$.
We will consider the cases whether $E\mathfrak{X}(S)=S$ in the following.

\textbf{Case 1.} If $E\mathfrak{X}(S)=S$. That is $S/\mathfrak{X}(S)$ is abelian.
By \cite[Theorem 1.1]{GHM}, we have $J(S)\leq \mathfrak{X}(S)$. So $E\leq J(S)\leq \mathfrak{X}(S)$.
That is a contradiction.

\textbf{Case 2.}  If $E\mathfrak{X}(S)\lneq S$, set $L=E\mathfrak{X}(S)$.
Since $E \leq \mathfrak{X}_1(S)$, we have $L\leq \mathfrak{X}_1(S)$.
We assert that $\mathfrak{X}(L)=\mathfrak{X}(S)$.

Since $\mathfrak{X}(S)\leq L$, we have $\mathfrak{X}(L)\geq\mathfrak{X}(S)$
by Lemma 3.1. By the definition of Oliver's subgroup,
there exist two sequences
$$1=Q_0\leq Q_1\leq\cdots\leq Q_n=\mathfrak{X}(S);~~~1=R_0\leq R_1\leq\cdots\leq R_m=\mathfrak{X}(L)$$
such that $Q_i\unlhd S, R_j\unlhd L$, and such that
$$[\Omega_1(C_S(Q_{i-1})), Q_i; p-1]=1~~~ \mathrm{and} ~~~[\Omega_1(C_L(R_{j-1})), R_j; p-1]=1$$
holds for each $1\leq i\leq n$ and $1\leq j\leq m$ respectively.
we have a sequence
$$1=Q_0\leq Q_1\leq\cdots\leq Q_n=\mathfrak{X}(S)= Q_nR_0\leq Q_nR_1\leq\cdots\leq Q_nR_m=\mathfrak{X}(L)$$
satisfies the condition $(\ast)$ in Definition 2.1 for $L$.

If $\mathfrak{X}(S)\lneq \mathfrak{X}(L)$, then we can suppose that $Q_{n}=\mathfrak{X}(S)= Q_nR_0\lneq Q_nR_1$.
Since the above sequence satisfies the condition $(\ast)$ in Definition 2.1 for $L$,
it implies that
$$[\Omega_1(C_L(Q_{n})), Q_nR_1; p-1]=1.$$
Also $Q_{n}=\mathfrak{X}(S)$ and $L\geq \mathfrak{X}(S)$,
we have $C_L(Q_{n})=Z(\mathfrak{X}(S))$ by \cite[Lemma 3.2]{O}.
That means
$$[\Omega_1(Z(\mathfrak{X}(S))), Q_nR_1; p-1]=1.$$
Since $L\leq \mathfrak{X}_1(S)$, that is $L/\mathfrak{X}(S)\leq Z(S/\mathfrak{X}(S))$.
So we have $Q_nR_1\unlhd S$.
Hence, we have $Q_nR_1\leq \mathfrak{X}(S)$ by \cite[Lemma 3.3]{O}.
That is a contradiction. Hence, we have $\mathfrak{X}(S)=\mathfrak{X}(L)$.

Since $L=E\mathfrak{X}(S)\lneq S$, we can use induction to get $E\leq \mathfrak{X}(L)=\mathfrak{X}(S)$.
First, $E$ is also an elementary abelian $p$-subgroups whose rank is
the $p$-rank of $L$. And $\mathfrak{X}_1(L)=L$ because $\mathfrak{X}(L)=\mathfrak{X}(S)$.
So by induction, we have $E\leq \mathfrak{X}(L)$ because $E\leq L=\mathfrak{X}_1(L)$.
Hence $E\leq \mathfrak{X}(L)=\mathfrak{X}(S)$, that is a contradiction.
\end{proof}

As a corollary, we give a proof of Corollary 1.3 as follows.

\begin{corollary} Let $S$ be a finite $p$-group for an odd prime $p$.
Set $\mathfrak{X}_1(S) \leq S$ such that  $\mathfrak{X}_1(S)/\mathfrak{X}(S)=Z(S/\mathfrak{X}(S))$.
If $J(S)\leq \mathfrak{X}_1(S)$,
 then $J(S)\leq \mathfrak{X}(S)$.
\end{corollary}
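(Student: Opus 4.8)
The plan is to deduce this corollary directly from Theorem 1.2, which carries all the weight. The key observation is that the hypothesis $J(S)\leq\mathfrak{X}_1(S)$ forces every one of the generators of $J(S)$ to lie inside $\mathfrak{X}_1(S)$, and Theorem 1.2 then pushes each such generator down into $\mathfrak{X}(S)$; since $\mathfrak{X}(S)$ is itself a subgroup, the join of these generators, which is exactly $J(S)$, must land in $\mathfrak{X}(S)$ as well.

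First I would recall that, by definition, $J(S)$ is generated by the family of all elementary abelian $p$-subgroups of $S$ whose rank equals the $p$-rank of $S$; that is, $J(S)=\langle E : E \text{ elementary abelian of maximal rank}\rangle$. Next I would fix one such subgroup $E$. Since $E\leq J(S)$ and we are assuming $J(S)\leq\mathfrak{X}_1(S)$, we obtain $E\leq\mathfrak{X}_1(S)$. Thus $E$ is an elementary abelian $p$-subgroup of maximal rank contained in $\mathfrak{X}_1(S)$, which is precisely the hypothesis of Theorem 1.2. Applying that theorem yields $E\leq\mathfrak{X}(S)$.

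Finally, since this conclusion holds for every elementary abelian $p$-subgroup $E$ of maximal rank, and since $\mathfrak{X}(S)$ is a subgroup of $S$ containing all of these generators, I would conclude that $J(S)=\langle E\rangle\leq\mathfrak{X}(S)$, as required.

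There is essentially no genuine obstacle here: the substantive content was already established in Theorem 1.2, and the passage from a single maximal-rank elementary abelian subgroup to the full Thompson subgroup is immediate, relying only on the fact that $J(S)$ is generated by such subgroups together with the closure of the subgroup $\mathfrak{X}(S)$ under the join of its elements. The one point deserving a moment's care is the verification that each generator $E$ really does satisfy the hypothesis $E\leq\mathfrak{X}_1(S)$ of Theorem 1.2, but this follows at once from the chain $E\leq J(S)\leq\mathfrak{X}_1(S)$.
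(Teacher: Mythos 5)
Your proof is correct and follows exactly the paper's own argument: each maximal-rank elementary abelian subgroup $E$ satisfies $E\leq J(S)\leq \mathfrak{X}_1(S)$, Theorem 1.2 gives $E\leq \mathfrak{X}(S)$, and since these subgroups generate $J(S)$, the conclusion follows. The paper compresses this into one sentence, but the reasoning is identical.
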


\begin{proof} Since $J(S)$ is the Thompson subgroup generated by all elementary abelian $p$-subgroups whose rank is
the $p$-rank of $S$, we have $J(S)\leq \mathfrak{X}(S)$ by
 Theorem 3.2.
\end{proof}

Now, as an easy application, we give a proof of the following corollary.

\begin{corollary} Let $S$ be a finite $p$-group for an odd prime $p$.
Then  $|J(S)\mathfrak{X}(S):\mathfrak{X}(S)|\neq p$.
\end{corollary}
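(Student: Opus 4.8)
The plan is to argue by contradiction, leveraging Corollary 3.3 (the statement that $J(S)\leq\mathfrak{X}_1(S)$ forces $J(S)\leq\mathfrak{X}(S)$). So I would suppose that $|J(S)\mathfrak{X}(S):\mathfrak{X}(S)|=p$ and aim to derive $J(S)\leq\mathfrak{X}(S)$, which would collapse the index to $1$.

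First I would record that $J(S)$ is a characteristic subgroup of $S$: it is generated by the collection of all elementary abelian $p$-subgroups of maximal rank, and this collection is invariant under every automorphism of $S$. In particular $J(S)\unlhd S$, and since also $\mathfrak{X}(S)\unlhd S$, the product $J(S)\mathfrak{X}(S)$ is a normal subgroup of $S$ containing $\mathfrak{X}(S)$. Passing to the quotient, $J(S)\mathfrak{X}(S)/\mathfrak{X}(S)$ is then a normal subgroup of the $p$-group $S/\mathfrak{X}(S)$, and by the standing hypothesis it has order exactly $p$.

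The key step is the elementary fact that a normal subgroup of order $p$ in a $p$-group is central: conjugation gives a homomorphism from the $p$-group into $\operatorname{Aut}(\Z/p)\cong\Z/(p-1)$, whose image must be trivial. Applying this to $S/\mathfrak{X}(S)$, I conclude $J(S)\mathfrak{X}(S)/\mathfrak{X}(S)\leq Z(S/\mathfrak{X}(S))=\mathfrak{X}_1(S)/\mathfrak{X}(S)$, so that $J(S)\mathfrak{X}(S)\leq\mathfrak{X}_1(S)$ and hence $J(S)\leq\mathfrak{X}_1(S)$. Now Corollary 3.3 applies and yields $J(S)\leq\mathfrak{X}(S)$; but then $J(S)\mathfrak{X}(S)=\mathfrak{X}(S)$, so $|J(S)\mathfrak{X}(S):\mathfrak{X}(S)|=1\neq p$, contradicting the assumption.

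I do not expect any serious obstacle here, since all the real work is already packaged into Theorem 3.2 and Corollary 3.3. The only ingredient beyond those results is the centrality of a normal subgroup of order $p$ in a $p$-group, and the single idea of the argument is precisely that this centrality is exactly what is needed to place $J(S)\mathfrak{X}(S)$ inside $\mathfrak{X}_1(S)$, after which the previously established equivalence finishes the proof.
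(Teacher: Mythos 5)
Your proposal is correct and follows essentially the same route as the paper: assume the index is $p$, observe that the order-$p$ normal subgroup $J(S)\mathfrak{X}(S)/\mathfrak{X}(S)$ of the $p$-group $S/\mathfrak{X}(S)$ must be central (the paper justifies this via the nontrivial intersection of a normal subgroup with the center, you via the conjugation map into $\operatorname{Aut}(\mathbb{Z}/p)$ --- an immaterial difference), and then apply Corollary 3.3 to force $J(S)\leq\mathfrak{X}(S)$, contradicting the assumed index.
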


\begin{proof} Suppose $|J(S)\mathfrak{X}(S):\mathfrak{X}(S)|=p$,
since $J(S)\mathfrak{X}(S)/\mathfrak{X}(S)\unlhd S/\mathfrak{X}(S)$,
thus $$J(S)\mathfrak{X}(S)/\mathfrak{X}(S)\leq Z(S/\mathfrak{X}(S))$$
because $(J(S)\mathfrak{X}(S)/\mathfrak{X}(S))\cap Z(S/\mathfrak{X}(S))\neq 1$.
By Corollary 3.3, we have $J(S)\leq \mathfrak{X}(S)$.
That is a contradiction to the assumption, thus we have
$|J(S)\mathfrak{X}(S):\mathfrak{X}(S)|\neq p$.
\end{proof}

\section{\bf The proof of the Theorem 1.4}

Now, we prove Theorem 1.4 as follows.
\begin{theorem} Let $S$ be a finite $p$-group for an odd prime $p$. Then Oliver's $p$-group conjecture holds
 for $S$ if and only if $\mathfrak{X}(L)=\mathfrak{X}(S)$ for each subgroup $L$ with
 $\mathfrak{X}(S)\leq L\leq J(S)\mathfrak{X}(S)$.
\end{theorem}

\begin{proof} Suppose first Oliver's $p$-group conjecture holds
 for $S$, that is $J(S)\leq \mathfrak{X}(S)$. Then $J(S)\mathfrak{X}(S)=\mathfrak{X}(S)$.
 So we get the result.

 Conversely, let $S$ be a minimal counterexample,
 that is  $J(S)\nleq \mathfrak{X}(S)$. And, if $L$ is
a finite $p$-group with $|L|\lneq |S|$, and
$\mathfrak{X}(T)=\mathfrak{X}(L)$ for each subgroup $T$ with
 $\mathfrak{X}(L)\leq T\leq J(L)\mathfrak{X}(L)$,
then $J(L)\leq \mathfrak{X}(L)$.

Since $J(S)\nleq \mathfrak{X}(S)$,
thus we can let $E$ be an elementary abelian $p$-subgroup whose rank is
the $p$-rank of $S$ and $E\nleq \mathfrak{X}(S)$.
Set $H=E\mathfrak{X}(S)$, and
we will consider the cases whether $E\mathfrak{X}(S)=S$ in the following.

\textbf{Case 1.} If $H=E\mathfrak{X}(S)=S$, then $S/\mathfrak{X}(S)$ is abelian.
By \cite[Theorem 1.1]{GHM}, we have $J(S)\leq \mathfrak{X}(S)$.
That is a contradiction.

\textbf{Case 2.} If $H=E\mathfrak{X}(S)\lneq S$,
we will use induction to get $J(H)\leq \mathfrak{X}(H)$.
In fact, let $T\leq H$ such that $\mathfrak{X}(H)\leq T\leq J(H)\mathfrak{X}(H)$.
Since $\mathfrak{X}(S)\leq H\leq J(S)\mathfrak{X}(S)$,
by the condition of $S$, we have $\mathfrak{X}(H)=\mathfrak{X}(S)$.
Hence $\mathfrak{X}(S)=\mathfrak{X}(H)\leq T\leq J(H)\mathfrak{X}(H)\leq J(S)\mathfrak{X}(S).$
Also by the condition of $S$, we have $\mathfrak{X}(T)=\mathfrak{X}(S)=\mathfrak{X}(H)$.
Hence by induction, we have $J(H)\leq \mathfrak{X}(H)=\mathfrak{X}(S)$.

Since $E$ is an elementary abelian $p$-subgroup whose rank is
the $p$-rank of $S$, thus $E$ is also an elementary abelian $p$-subgroup whose rank is
the $p$-rank of $H$ because $E\leq H\leq S$.
Then $E\leq J(H) \leq \mathfrak{X}(H)= \mathfrak{X}(S)$.
That is also a contradiction.

So, we prove this theorem.
\end{proof}

Now, we take an application as follows.

\begin{theorem} Let $S$ be a finite $p$-group for an odd prime $p$.
If $L\unlhd S$ for each subgroup $L$ with $\mathfrak{X}(S)\leq L\leq J(S)\mathfrak{X}(S)$,
 then $J(S)\leq \mathfrak{X}(S)$.
\end{theorem}

\begin{proof}By Theorem 4.1, we only need to prove that
$\mathfrak{X}(L)=\mathfrak{X}(S)$ when $\mathfrak{X}(S)\leq L\leq J(S)\mathfrak{X}(S)$.

Since $\mathfrak{X}(S)\leq L$, we have $\mathfrak{X}(L)\geq\mathfrak{X}(S)$
by Lemma 3.1. By the definition of Oliver's subgroup,
there exist two sequences
$$1=Q_0\leq Q_1\leq\cdots\leq Q_n=\mathfrak{X}(S);~~~1=R_0\leq R_1\leq\cdots\leq R_m=\mathfrak{X}(L)$$
such that $Q_i\unlhd S, R_j\unlhd L$, and such that
$$[\Omega_1(C_S(Q_{i-1})), Q_i; p-1]=1~~~ \mathrm{and} ~~~[\Omega_1(C_L(R_{j-1})), R_j; p-1]=1$$
holds for each $1\leq i\leq n$ and $1\leq j\leq m$ respectively.
we have a sequence
$$1=Q_0\leq Q_1\leq\cdots\leq Q_n=\mathfrak{X}(S)= Q_nR_0\leq Q_nR_1\leq\cdots\leq Q_nR_m=\mathfrak{X}(L)$$
satisfies the condition $(\ast)$ in Definition 2.1 for $L$.

If $\mathfrak{X}(S)\lneq \mathfrak{X}(L)$, then we can suppose that $Q_{n}=\mathfrak{X}(S)= Q_nR_0\lneq Q_nR_1$.
Since the above sequence satisfies the condition $(\ast)$ in Definition 2.1 for $L$,
it implies that
$$[\Omega_1(C_L(Q_{n})), Q_nR_1; p-1]=1.$$
Also $Q_{n}=\mathfrak{X}(S)$ and $L\geq \mathfrak{X}(S)$,
we have $C_L(Q_{n})=Z(\mathfrak{X}(S))$ by \cite[Lemma 3.2]{O}.
That means
$$[\Omega_1(Z(\mathfrak{X}(S))), Q_nR_1; p-1]=1.$$
By the condition of the theorem, we have $Q_nR_1\unlhd S$.
So we have $Q_nR_1\leq \mathfrak{X}(S)$ by \cite[Lemma 3.3]{O}.
That is a contradiction. Hence, we have $\mathfrak{X}(S)=\mathfrak{X}(L)$.
\end{proof}

\textbf{ACKNOWLEDGMENTS}\hfil\break
The author would like to thank  Prof. C. Broto for his constant encouragement in Barcelona in Spain.


\begin{thebibliography}{13}


\bibitem{BLO1}  C.  Broto,  R.  Levi,    B.  Oliver,  Homotopy equivalences of $p$-completed classifying spaces of
finite groups, Invent. Math. 151 (2003), 611-664.


\bibitem{BLO2}  C.  Broto,  R.  Levi,    B.  Oliver,  The  homotopy  theory  of  fusion  systems,  J.  Amer.  Math.  Soc.  16
         (2003), 779-856.


\bibitem{Ch} A. Chermak, Fusion systems and localities, Acta Math. 211 (2013), 47-139.


\bibitem{GHL} D. Green, L. H$\mathrm{\acute{e}}$thelyi, and M. Lilienthal, On Oliver¡¯s p-group conjecture, Algebra
Number Theory (2008), no. 2, 969-977.

\bibitem{GHM} D. Green, L. H$\mathrm{\acute{e}}$thelyi, and N. Mazza, On Oliver¡¯s p-group conjecture: II,
Math. Ann. 347 (2010), no. 1, 111-122.

\bibitem{GHM2} D. Green, L. H$\mathrm{\acute{e}}$thelyi, and N. Mazza,
 On a strong form of Oliver¡¯s p-group conjecture, J. Algebra 342 (2011), 1-15.

 \bibitem{Ly}   J. Lynd, 2-subnormal quadratic offenders and Oliver's $p$-group conjecture, Proc. Edin. Math. Soc. 56 (2013), 211-222.

\bibitem{O} B. Oliver, Equivalences of classifying spaces completed at odd primes, Math. Proc. Cambridge
Philos. Soc. 137 (2004), no. 2, 321-347.

\bibitem{O2} B. Oliver, Equivalences of classifying spaces completed at the prime two, Amer. Math. Soc. Memoirs
848 (2006).

\bibitem{O3} B. Oliver, Existence and uniqueness of linking systems: Chermak's proof via obstruction theory,
Acta Math. 211 (2013), 141-175


\end{thebibliography}
\end{document}